\newtheorem{defi}{Definition}[section]
\newtheorem{teo}{Theorem}[section]
\newtheorem{pro}[teo]{Proposition}
\newtheorem{cor}[teo]{Corollary}
\newtheorem{ex}{Example}[section]
\newcommand{\nid}{\noindent }
\begin{document}

	\title{Tail dependence and smoothness}
	\author{Helena Ferreira}
	\affil{Universidade da Beira Interior, Centro de Matem\'{a}tica e Aplica\c{c}\~oes (CMA-UBI), Avenida Marqu\^es d'Avila e Bolama, 6200-001 Covilh\~a, Portugal\\ \texttt{helena.ferreira@ubi.pt}}

	\author{Marta Ferreira}
	\affil{Center of Mathematics of Minho University\\ Center for Computational and Stochastic Mathematics of University of Lisbon \\
		Center of Statistics and Applications of University of Lisbon, Portugal\\ \texttt{msferreira@math.uminho.pt} }
	
	\date{}
	
	\maketitle

\abstract{The risk of catastrophes is related to the possibility of occurring extreme values. Several statistical methodologies have been developed in order to evaluate the propensity of a process for the occurrence of high values and the permanence of these in time. The extremal index $\theta$ (Leadbetter \cite{Leadbetter1983} 1983) allows to infer the tendency for clustering of high values, but does not allow to evaluate the greater or less amount of oscillations in a cluster. The estimation of $\theta$ entails the validation of local dependence conditions regulating the distance between high levels oscillations of the process, which is difficult to implement in practice. In this work, we propose a smoothness coefficient to evaluate the degree of smoothness/oscillation in the trajectory of a process, with an intuitive reading and simple estimation. Application in some examples will be provided. We will see that, in a stationary process, it coincides with the tail dependence coefficient $\lambda$ (Sibuya \cite{Sibuya1960} 1960, Joe \cite{Joe1997} 1997), providing a new interpretation of the latter. This relationship will inspire a new estimator for $\lambda$ and its performance will be evaluated based on a simulation study.}

\nid\textbf{keywords:} {extreme values, smoothness coefficient, tail dependence coefficient}\\

\nid\textbf{AMS 2000 Subject Classification}: 60G70\\

\section{Introduction}
The occurrence of high values in a stochastic process can mean a natural, social or economic catastrophe, which has motivated the development of statistical models and techniques for extremes of random variables (see, e.g., Gomes and Guillou, \cite {Gomes2014} 2014 and their references). The unpredictability we would like to dominate is based on the propensity of the process for high values and the mean time permanency of these, usually measured by the arithmetic inverse of the extremal index $\theta$ (Leadbetter \cite{Leadbetter1983} 1983; Hsing \emph{et al.}~\cite{Hsing1988} 1988). Clustering of high values can be predicted in models that verify local dependency conditions D$^{(k)}(u)$, which regulate the distance between oscillations of the process relative to high levels $u$ (Chernick \emph{et al.}~\cite{Chernick1991} 1991). Under the validity of such conditions we can obtain expressions for the mean size $1/\theta$ of a cluster of high values. Not only the validation of local dependence conditions is difficult in practice, but also the estimation of $1/\theta$ does not give us information about the greater or less amount of oscillations in a cluster. In this work, we propose a measure to distinguish between processes with more oscillating trajectories from processes with smoother ones, which has an intuitive reading and is easy to estimate. The smoothness coefficient of a block of variables $\{X_i,\,n\leq i\leq m\}$ that we propose takes values in $[0,1]$ and grows with the degree of concordance of the variables. We will applied it in theoretical examples. We will also verify that, in a stationary process, it coincides with the tail dependence coefficient $\lambda$  (Sibuya \cite{Sibuya1960} 1960, Joe \cite{Joe1997} 1997), which gives us a new reading for this well-known coefficient in the  literature of extremes. The new representation for the tail dependence coefficient inspires an estimation procedure that will be analysed through a simulation study.

This paper is organized as follows: in Section \ref{scoef} we introduce the smoothness coefficient and present some properties and examples. In Section \ref{sinfer} we consider a new estimator for $\lambda$ and analyse its performance through simulation.

\section{The smoothness coefficient}\label{scoef}

Consider $\{X_i\}_{i\geq 1}$ a sequence of real random variables (r.v.'s) and denote $F_i$ the distribution function (d.f.) of $X_i$, $i\geq 1$. A natural way to evaluate the propensity for oscillations within a process $\{X_i\}_{i\geq 1}$ is to compare the expected number of oscillations in instant $i$,
\begin{eqnarray}\nonumber
\{F_i(X_i)\leq u<F_j(X_j)\},\, j=i-1,i+1,
\end{eqnarray} 
relative to real high levels $u$, with the expected number of exceedances of $u$,
\begin{eqnarray}\nonumber
\{F_j(X_j)>u\},\, j=i-1,i+1,
\end{eqnarray} 
around the instant $i$. Existing, at least, one exceedance between instants $n$ and $m$ ($n,m\in\mathbb{N}$), the expected total of oscillations will be closer of the expected total of exceedances, for $n\leq i\leq m$, in processes with more oscillating trajectories. We then propose as a summary measure of the result of this comparison between exceedances and oscillations, a coefficient with values in $[0,1]$, which increases with the concordance of the variables.

\begin{defi} The smoothness coefficient $S_{n,m}$ of $\{X_i\}_{n\leq i\leq m}$ is defined by
\begin{eqnarray}\label{smoothCoef}
S_{n,m}=1-\displaystyle\lim_{u\uparrow 1}\frac{E\left(\sum_{i=n}^{m}\sum_{j\in V(i)}\mathbf{1}_{\{F_i(X_i)\leq u<F_j(X_j)\}}|\sum_{i=n}^{m}\mathbf{1}_{\{F_i(X_i)>u\}}>0\right)}{E\left(\sum_{i=n}^{m}\sum_{j\in V(i)}\mathbf{1}_{\{F_j(X_j)>u\}}|\sum_{i=n}^{m}\mathbf{1}_{\{F_i(X_i)>u\}}>0\right)},
\end{eqnarray}
where $V(i)=\{i-1,i+1\}$, provided the limit exists.
\end{defi}

The proposed smoothness coefficient can naturally be expressed as a function of tail dependence coefficients 
\begin{eqnarray}\nonumber
\lambda(j|i)=\displaystyle\lim_{u\uparrow 1}P(F_j(X_j)>u|F_i(X_i)>u)\,.
\end{eqnarray} 
These summarize the behavior of the bivariate tails of a sequence and have been extensively studied and applied in the literature of extremes (see, e.g., Schmidt and Stadtm\"{u}ller \cite{Schmidt2006} 2006, Li \cite{Li2009} 2009, Ferreira and Ferreira \cite{Ferreira2014} 2014, and references therein).

\begin{pro}
The smoothness coefficient $S_{n,m}$ of $\{X_i\}_{n\leq i\leq m}$ satisfies
\begin{eqnarray}\nonumber
S_{n,m}=\frac{1}{2(m-n+1)}\displaystyle\sum_{i=n}^{m}\sum_{j\in V(i)}\lambda(j|i)\,,
\end{eqnarray}
provided $\lambda(j|i)$ exists for all $n\leq i\leq m$ and $j\in V(i)$.
\end{pro}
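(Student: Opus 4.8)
The plan is to exploit the fact that the conditioning event $A_u=\{\sum_{i=n}^{m}\mathbf{1}_{\{F_i(X_i)>u\}}>0\}$ appears in both the numerator and the denominator of the ratio defining $S_{n,m}$. Writing each conditional expectation in the form $E(\,\cdot\,\mathbf{1}_{A_u})/P(A_u)$, the normalising factor $P(A_u)$ cancels, so the limit inside the definition reduces to the ratio of the two \emph{unconditional} expectations of $N_u=\sum_{i=n}^{m}\sum_{j\in V(i)}\mathbf{1}_{\{F_i(X_i)\leq u<F_j(X_j)\}}$ and $D_u=\sum_{i=n}^{m}\sum_{j\in V(i)}\mathbf{1}_{\{F_j(X_j)>u\}}$. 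The observation that makes this reduction plausible is that a summand is non-zero only when $F_j(X_j)>u$, i.e.\ only when $u$ is exceeded at the index $j$; for an interior neighbour $j\in\{n,\dots,m\}$ this already realises $A_u$, which is what allows one to replace $\mathbf{1}_{\{\cdots\}}\mathbf{1}_{A_u}$ by $\mathbf{1}_{\{\cdots\}}$.

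Once the conditioning is removed, I would evaluate the two expectations by linearity. Assuming the marginals $F_i$ continuous, the probability integral transform gives $P(F_j(X_j)>u)=1-u$ for every $j$, and since each index $i$ contributes exactly the two neighbours in $V(i)$, the denominator is $E(D_u)=\sum_{i=n}^{m}\sum_{j\in V(i)}(1-u)=2(m-n+1)(1-u)$. For the numerator I would use the elementary identity $\mathbf{1}_{\{F_i(X_i)\leq u<F_j(X_j)\}}=\mathbf{1}_{\{F_j(X_j)>u\}}-\mathbf{1}_{\{F_i(X_i)>u,\,F_j(X_j)>u\}}$, so that, after taking expectations, $E(N_u)=2(m-n+1)(1-u)-\sum_{i=n}^{m}\sum_{j\in V(i)}P(F_i(X_i)>u,\,F_j(X_j)>u)$.

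Substituting both expressions into $S_{n,m}=1-\lim_{u\uparrow1}E(N_u)/E(D_u)$ makes the leading terms cancel and leaves
\[
S_{n,m}=\lim_{u\uparrow1}\frac{1}{2(m-n+1)(1-u)}\sum_{i=n}^{m}\sum_{j\in V(i)}P(F_i(X_i)>u,\,F_j(X_j)>u).
\]
Because the double sum is finite, the limit passes inside it; for each pair I would rewrite $P(F_i(X_i)>u,\,F_j(X_j)>u)/(1-u)=P(F_j(X_j)>u\mid F_i(X_i)>u)$, which tends to $\lambda(j|i)$ by definition. This produces the announced identity $S_{n,m}=\frac{1}{2(m-n+1)}\sum_{i=n}^{m}\sum_{j\in V(i)}\lambda(j|i)$.

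The step I expect to demand the most care is the removal of the conditioning in the first paragraph. For interior neighbours the implication ``non-zero summand $\Rightarrow A_u$'' is immediate, but the boundary neighbours $j=n-1$ and $j=m+1$ lie outside the block $\{n,\dots,m\}$, so the corresponding summands of $N_u$ and $D_u$ do not by themselves guarantee an exceedance inside the block. One therefore has to verify that these boundary contributions enter the numerator and the denominator in a way that does not disturb the limiting ratio --- concretely, that they are asymptotically negligible with respect to $1-u$, so that the limit is governed entirely by the interior pairs together with the continuous-marginal normalisation $P(F_i(X_i)>u)=1-u$. Isolating and estimating these two boundary terms is the genuine technical content hidden behind the otherwise purely algebraic cancellation.
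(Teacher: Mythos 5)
Your calculation is, step for step, the paper's own proof: the paper likewise cancels $P(A_u)$, silently replaces the conditional expectations by unconditional ones, uses the identity $\mathbf{1}_{\{F_i(X_i)\leq u<F_j(X_j)\}}=\mathbf{1}_{\{F_j(X_j)>u\}}-\mathbf{1}_{\{F_i(X_i)>u,\,F_j(X_j)>u\}}$ together with $P(F_j(X_j)>u)=1-u$ for continuous margins, and passes the limit through the finite sum. Where you differ is that you make explicit the one step the paper does not even acknowledge: dropping the indicator $\mathbf{1}_{A_u}$ is automatic only when a non-zero summand forces the conditioning event, which fails for the boundary pairs $(i,j)=(n,n-1)$ and $(m,m+1)$. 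Your diagnosis of where the difficulty sits is therefore sharper than the published argument.

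The problem is that the repair you sketch --- proving the boundary contributions are asymptotically negligible with respect to $1-u$ --- is a step that would fail. Keep your notation $N_u$, $D_u$, $A_u$ and set $c_u=E(N_u\mathbf{1}_{A_u^c})$. On $A_u^c$ every in-block variable lies below $u$, so both $N_u$ and $D_u$ collapse to the same quantity $\mathbf{1}_{\{F_{n-1}(X_{n-1})>u\}}+\mathbf{1}_{\{F_{m+1}(X_{m+1})>u\}}$; hence the two corrections are equal, $E(N_u\mathbf{1}_{A_u^c})=E(D_u\mathbf{1}_{A_u^c})=c_u$, but $c_u$ is genuinely of order $1-u$, not $o(1-u)$. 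For instance, for the stationary MAR(1) model used in Section 3 of the paper, $X_k=\max\left(cX_{k-1},(1-c)Z_k\right)$ with unit Fr\'echet margins, a direct computation gives $c_u/(1-u)\to 2(1-c)>0$. Consequently the exact value of the ratio in the definition,
\[
\frac{E(N_u\mid A_u)}{E(D_u\mid A_u)}=\frac{E(N_u)-c_u}{E(D_u)-c_u}\,,
\]
does not have the same limit as $E(N_u)/E(D_u)$: subtracting equal, non-negligible quantities from numerator and denominator changes the limit of the ratio. Under the literal conditioning event one obtains $S_{n,m}=\sum_{i=n}^{m}\sum_{j\in V(i)}\lambda(j|i)\big/\left(2(m-n+1)-\gamma\right)$ with $\gamma=\lim_{u\uparrow1}c_u/(1-u)$, which for MAR(1) equals $(m-n+1)c/(m-n+c)\neq c=\lambda$, contradicting the claimed formula. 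So no negligibility estimate can close your gap; what actually validates the Proposition is a reinterpretation of the conditioning event as ``at least one exceedance among the indices $n-1,\dots,m+1$'' (the block together with its neighbours). With that event, $N_u>0$ and $D_u>0$ each imply the event, so $E(N_u\mathbf{1}_{A_u})=E(N_u)$ and $E(D_u\mathbf{1}_{A_u})=E(D_u)$ hold identically, and the rest of your (and the paper's) algebra is exact with nothing left to estimate. As written, your proof inherits the same unjustified replacement as the paper's, and the route you propose for justifying it cannot be carried out.
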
 

\begin{proof}
	Observe that
	\begin{eqnarray}\label{eqFerreira1}
	\begin{array}{rl}
	S_{n,m}
	=&1-\displaystyle\lim_{u\uparrow 1}\frac{\sum_{i=n}^{m}\sum_{j\in V(i)}\left(P(F_j(X_j)>u)-P(F_i(X_i)> u,F_j(X_j)>u)\right)}{\sum_{i=n}^{m}\sum_{j\in V(i)}P(F_j(X_j)>u)}\\\\
	=& \displaystyle\lim_{u\uparrow 1}\frac{\sum_{i=n}^{m}\sum_{j\in V(i)}P(F_i(X_i)> u,F_j(X_j)>u)}{2(m-n+1)(1-u)}\\\\
	=& \displaystyle\frac{\sum_{i=n}^{m}\sum_{j\in V(i)}\lim_{u\uparrow 1}P(F_j(X_j)> u|F_i(X_i)>u)}{2(m-n+1)}\,.
	\end{array}
	\end{eqnarray}
\end{proof} 

This result points to the reading of $\lambda=\lambda(j|i)$, $j\in V(i)$, in a stationary process, as the smoothness coefficient for any block of variables $\{X_i,\,n\leq i\leq m\}$.

\begin{cor}\label{corLambda}
	If $\{X_i\}_{i\geq 1}$ is a stationary sequence with tail dependence coefficient $\lambda$, then
	\begin{eqnarray}\nonumber
	\lambda=S_{n,m},\, \forall\, 1\leq n< m\,.
	\end{eqnarray}
\end{cor}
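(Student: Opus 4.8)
The plan is to reduce everything to the formula for $S_{n,m}$ supplied by the Proposition and then to use stationarity to identify each tail dependence coefficient in the sum with $\lambda$. First I would apply the Proposition to write
$$S_{n,m}=\frac{1}{2(m-n+1)}\sum_{i=n}^{m}\sum_{j\in V(i)}\lambda(j|i),$$
observing that, since $|V(i)|=2$ for every $i$, the inner sum has two terms and the double sum has exactly $2(m-n+1)$ terms in total.

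The key step is that every pair $(i,j)$ with $j\in V(i)=\{i-1,i+1\}$ satisfies $|i-j|=1$. Because $\{X_i\}_{i\geq 1}$ is stationary, the marginals coincide, $F_i=F$ for all $i$, and the joint law of $(X_i,X_j)$ depends only on the lag $|i-j|$. Hence $\lambda(j|i)=\lim_{u\uparrow 1}P(F(X_j)>u\mid F(X_i)>u)$ takes the same value for every lag-one pair, namely the tail dependence coefficient $\lambda$ of the sequence. Substituting $\lambda(j|i)=\lambda$ into the double sum yields $\sum_{i=n}^{m}\sum_{j\in V(i)}\lambda=2(m-n+1)\lambda$, and dividing by $2(m-n+1)$ gives $S_{n,m}=\lambda$ for every $1\leq n<m$.

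The only point requiring care is the boundary term $j=i-1$ when $i=n=1$, which formally refers to $X_0$; I would dispose of it by extending $\{X_i\}_{i\geq 1}$ to a two-sided stationary sequence (possible by Kolmogorov's extension theorem), so that $\lambda(0|1)$ is well defined and, by stationarity, again equals $\lambda$. Apart from this bookkeeping concerning the indices, the statement follows immediately once the Proposition and stationarity are combined, so I do not anticipate any genuine obstacle.
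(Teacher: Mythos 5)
Your proposal is correct and follows exactly the route the paper intends: the corollary is stated immediately after the Proposition precisely because, under stationarity, every lag-one coefficient $\lambda(j|i)$ equals $\lambda$, so the average $\frac{1}{2(m-n+1)}\sum_{i=n}^{m}\sum_{j\in V(i)}\lambda(j|i)$ collapses to $\lambda$. Your extra care with the boundary term $j=0$ when $n=1$ (via a two-sided stationary extension) addresses a point the paper silently glosses over, and is a legitimate way to make the statement hold for all $1\leq n<m$ as claimed.
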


Tail dependence increases with the concordance of the variables (Li, \cite{Li2009} 2009). We can therefore deduce the following properties from (\ref{eqFerreira1}).

\begin{pro}
	Let process $\{X_i\}_{n\leq i\leq m}$ have smoothness coefficient $S_{n,m}$. Then
	\begin{itemize}
		\item[(i)] $S_{n,m}\in [0,1]$;
		\item[(ii)] If $\{X_i,\,n\leq i\leq m\}$ are more concordant than $\{Y_i,\,n\leq i\leq m\}$, then $S_{n,m}^{(X)}\geq S_{n,m}^{(Y)}$.
	\end{itemize}
\end{pro}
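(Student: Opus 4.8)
The plan is to reduce both assertions to the representation established in the preceding proposition,
$$S_{n,m}=\frac{1}{2(m-n+1)}\sum_{i=n}^{m}\sum_{j\in V(i)}\lambda(j|i),$$
and then to argue term by term, since once $S_{n,m}$ is written as a normalised sum of tail dependence coefficients the two properties follow from elementary facts about each summand.

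For (i), I would first observe that every coefficient $\lambda(j|i)=\lim_{u\uparrow 1}P(F_j(X_j)>u\mid F_i(X_i)>u)$ is a limit of conditional probabilities and therefore lies in $[0,1]$. Since $V(i)=\{i-1,i+1\}$ contributes exactly two indices for each $i$, the double sum contains precisely $2(m-n+1)$ summands, so $S_{n,m}$ is an arithmetic mean of quantities in $[0,1]$ and hence itself belongs to $[0,1]$.

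For (ii), the decisive ingredient is the monotonicity of tail dependence with respect to the concordance order, recalled just before the statement from Li (2009). I would first record that whenever $\{X_i,\,n\leq i\leq m\}$ is more concordant than $\{Y_i,\,n\leq i\leq m\}$, the ordering is inherited by every bivariate margin, so that $(X_i,X_j)$ is more concordant than $(Y_i,Y_j)$ for each $n\leq i\leq m$ and $j\in V(i)$. Monotonicity of the upper tail dependence coefficient then yields $\lambda^{(X)}(j|i)\geq\lambda^{(Y)}(j|i)$ for every such pair. Summing these inequalities over $i$ and $j\in V(i)$ and dividing by the common constant $2(m-n+1)$ gives $S_{n,m}^{(X)}\geq S_{n,m}^{(Y)}$ via the representation in (\ref{eqFerreira1}).

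The termwise averaging is routine; the step I would treat most carefully is part (ii), where I must be sure that the assumed concordance order on the full process genuinely projects onto each bivariate pair $(X_i,X_j)$, and that Li's monotonicity result applies to the coefficient $\lambda(j|i)$ in exactly that bivariate order. Once these two facts are secured, the desired inequality propagates immediately through the finite sum.
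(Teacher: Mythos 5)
Your proposal is correct and takes essentially the same approach as the paper: both rest on the representation of $S_{n,m}$ as the normalised sum of the coefficients $\lambda(j|i)$ together with the monotonicity of tail dependence under the concordance order (Li, 2009), which the paper expresses through the ordering of the joint survival functions $P\left(\bigcap_{i=n}^{m}F_i(X_i)>u_i\right)\geq P\left(\bigcap_{i=n}^{m}F_i(Y_i)>u_i\right)$. The only minor difference is in (i): the paper deduces it directly from the definition (each oscillation indicator $\mathbf{1}_{\{F_i(X_i)\leq u<F_j(X_j)\}}$ is dominated by the exceedance indicator $\mathbf{1}_{\{F_j(X_j)>u\}}$, so the ratio lies in $[0,1]$), which works even without assuming each individual $\lambda(j|i)$ exists, whereas your averaging argument implicitly needs that proviso.
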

\begin{proof}
Assertion (i) results from the coefficient  definition  and for (ii), observe that, if $\{X_i,\,n\leq i\leq m\}$ are more concordant than $\{Y_i,\,n\leq i\leq m\}$, then $P\left(\bigcap_{i=n}^{m}F_i(X_i)>u_i\right)\geq P\left(\bigcap_{i=n}^{m}F_i(Y_i)>u_i\right)$, $\forall u_i\in [0,1]$.
\end{proof}

In the bounds of the concordance relation, we have the independent and totally dependent variables. If all random pairs $\{(X_i,X_j)\}$, $j\in V(i)$, $n\leq i\leq m$, are independent we have $S_{n,m}=0$, whereas if they are totally dependent then $S_{n,m}=1$. 

In the context of max-stable processes, the independence or total bivariate dependence of the variables in $\{X_i,\,n\leq i\leq m\}$ is equivalent to the independence or total dependence of all variables. Thus, if $\{X_i\}_{i\geq 1}$ is max-stable, then $\forall 1\leq n\leq m$, we will have $S_{n,m}=0$ if and only if $\{X_i,\,n\leq i\leq m\}$ are independent and $S_{n,m}=1$ if and only  $\{X_i,\,n\leq i\leq m\}$ are dependent. For the context of max-stability, we also have the possibility of relating $S_{n,m}$ with the extreme coefficients  $\epsilon$ (Tiago de Oliveira \cite{Tiago1962} 1962/1963, Smith \cite{Smith1990}  1990), which allows the estimation of the coefficients $\lambda(j|i)$, by estimating expected values (Ferreira, \cite{Ferreira2013} 2013).


\begin{ex}
	Consider the $r$-factor model (Einmahl \emph{et al.}~\cite{Einmahl2012}, 2012)
\begin{eqnarray}\nonumber
X_n=\max_{s=1,...,r} a_{s,n}^{\alpha}Z_s^{\alpha},\,n\geq 1,
\end{eqnarray}
where factors $Z_s$, $s=1,...,r$, are independent and Fr\'echet($\alpha$) distributed r.v.'s, $\alpha>0$, and $\{a_{s,n},\,s=1,...,r\}_{n\geq 1}$ are non-negative constants such that $\sum_{s=1}^{r}a_{s,n}>0$. Variables in $\{X_n\}_{n\geq 1}$ are not identically distributed since each one of the $r$ factors $Z_1,...,Z_r$ contribute to the value of $X_n$ with weights $a_{s,n}$ updated over time $n$. Specifically we have
\begin{eqnarray}\nonumber
F_{n}(x)=\exp\left(-x^{-1}\sum_{s=1}^{r}a_{s,n}^{\alpha}\right),\, x>0,\,n\geq 1\,.
\end{eqnarray}
We have
\begin{eqnarray}\label{ex1Lambda}
\begin{array}{rl}
\lambda(j|i)=&\displaystyle\lim_{u\uparrow 1}\frac{P(F_i(X_i)>u,F_j(X_j)>u)}{P(F_i(X_i)>u)}\\\\
=&2-\displaystyle\lim_{u\uparrow 1}\frac{1-P(F_i(X_i)\leq u,F_j(X_j)\leq u)}{1-u}\,.
\end{array}
\end{eqnarray}
Observe that
\begin{eqnarray}\label{ex1Lambda}
\begin{array}{rl}
P(F_i(X_i)\leq u,F_j(X_j)\leq u)
=&\displaystyle P\left(X_i\leq 1/(-\ln u \left(\sum_{s=1}^{r}a_{s,i}^{\alpha}\right)^{-1}),X_j\leq 1/(-\ln u \left(\sum_{s=1}^{r}a_{s,j}^{\alpha}\right)^{-1})\right)\\\\
=&\displaystyle P\left(\bigcap_{s=1}^{r}Z_s^{\alpha}\leq \min\left(a_{s,i}^{-\alpha}/(-\ln u (\sum_{s=1}^{r}a_{s,i}^{\alpha})^{-1}),a_{s,j}^{-\alpha}/(-\ln u (\sum_{s=1}^{r}a_{s,j}^{\alpha})^{-1})\right)\right)\,.
\end{array}
\end{eqnarray}
Thus, for the dependence on the tail of $X_i$ and $X_j$, we have
\begin{eqnarray}\label{ex1Lambda}
\begin{array}{rl}
\lambda(j|i)
=&2-\displaystyle\lim_{u\uparrow 1}\frac{1-u^{\sum_{s=1}^{r}\max\left(a_{s,i}^{\alpha}/\sum_{s=1}^{r}a_{s,i}^{\alpha},a_{s,j}^{\alpha}/\sum_{s=1}^{r}a_{s,j}^{\alpha}\right)}}{1-u}\\\\
=&\displaystyle 2-\sum_{s=1}^{r}\max\left(a_{s,i}^{\alpha}/\sum_{s=1}^{r}a_{s,i}^{\alpha},a_{s,j}^{\alpha}/\sum_{s=1}^{r}a_{s,j}^{\alpha}\right)\,.
\end{array}
\end{eqnarray}
Denoting $b_{s,n}=a_{s,n}^{\alpha}/\sum_{s=1}^{r}a_{s,i}^{\alpha}$, $n\geq 1$, $s=1,...,r$, we have
\begin{eqnarray}\nonumber
\begin{array}{rl}
S_{n,m}=&\displaystyle\frac{1}{2(m-n+1)}\sum_{i=n}^{m}\left(2-\sum_{s=1}^{r}\max(b_{s,i-1},b_{s,i})+2-\sum_{s=1}^{r}\max(b_{s,i},b_{s,i+1})\right)\\\\
=& 2-\displaystyle\frac{1}{2(m-n+1)}\sum_{i=n}^{m}\sum_{s=1}^{r}\left(\max(b_{s,i-1},b_{s,i})+\max(b_{s,i},b_{s,i+1})\right)\,.
\end{array}
\end{eqnarray}
In the particular case of $a_{s,n}=a_s$, $\forall n\geq 1$, we have a constant sequence and $b_{s,n}=b_s$, $\forall n\geq 1$. Thus we obtain $\sum_{s=1}^{r}b_s=1$ and $S_{n,m}=1$. 
If $r=1$, then $\{X_n\}_{n\geq 1}$ is a sequence of totally dependent variables and we have $\lambda(j|i)=1$ and $S_{n,m}=1$. 
Under the special case of equally weighted factors, that is, $a_{s,n}=a_n$, $s=1,...,r$, $n\geq 1$, and $X_n=a_n^{\alpha}\displaystyle\mathop{\max}_{s=1,...,r}Z_s^{\alpha}$, we have 
$$
b_{s,n}=b_n\equiv \frac{a_n^{\alpha}}{r a_n^{\alpha}}=\frac{1}{r},\, s=1,...,r,\, n\geq 1 ,
$$
and therefore
\begin{eqnarray}\nonumber
\begin{array}{rl}
S_{n,m}=& 2-\displaystyle\frac{1}{2(m-n+1)}\sum_{i=n}^{m}r\left(\max(b_{i-1},b_{i})+\max(b_{i},b_{i+1})\right)\\\\
=&  2-\displaystyle\frac{1}{2(m-n+1)}\sum_{i=n}^{m}r\left(\frac{2}{r}\right)=1\,.
\end{array}
\end{eqnarray}
\end{ex} 

\begin{ex}
	(Temporary Failures Model or "Stopped Clock") Let $\{Y_n\}_{n\geq 1}$ be a sequence of independent and identically distributed (i.i.d.) variables and independent of the sequence of Bernoulli variables $\{Z_n\}_{n\geq 1}$. Consider notations $F(x)=P(Y_n\leq x)$, $n\geq 1$, and 
	$$
	p_{n,n+1,\dots,n+s}(i_0,i_1,\dots,i_s)=P(Z_n=i_0,Z_{n+1}=i_1,\dots, Z_{n+s}=i_s),
	$$
	$i_0,\dots,i_s\in \{0,1\},\,s\geq 1$. We denominate by temporary failures model, a sequence $\{X_n\}_{n\geq 1}$ defined as follows:
	\begin{eqnarray}\nonumber
	\begin{array}{l}
	X_1=Y_1\\\\
	X_n=\left\{
	\begin{array}{ll}
	X_{n-1}&,\,\textrm{ se }Z_n=0\\
	Y_n&,\,\textrm{ se }Z_n=1
	\end{array},\,n\geq 2.
	\right.
	\end{array}
	\end{eqnarray}
	Such designation relies on the interpretation of $\{Z_n\}_{n\geq 1}$ as a sequence of states corresponding to the registration or non-registration of values of $\{X_n\}_{n\geq 1}$. Thus, if, for example,
	$\{Z_1=1,Z_2=0,Z_3=0,Z_4=1,Z_5=1,Z_6=0,Z_7=1,Z_8=0,Z_9=0,Z_{10}=0,Z_{11}=1\}$, we will have, almost surely, $\{X_1=Y_1,X_2=Y_1,X_3=Y_1,X_4=Y_4,X_5=Y_5,X_6=Y_5,X_7=Y_7,X_8=Y_7,X_9=Y_7,X_{10}=Y_7,X_{11}=Y_{11}\}$. Zero sequences at the values of $\{Z_n\}_{n\geq 1}$ determine replicates of the last recorded value of $\{Y_n\}_{n\geq 1}$. If $n$ is the time, the zeros of $Z_n$ mean a stop of the register in time, keeping the last record. Let us consider a short-failures model to illustrate the smoothness coefficient calculation. In the short-failures model, we assume that $p_{n,n+1}=0$, i.e., it is almost impossible to lose two or more consecutive records of $\{Y_n\}_{n\geq 1}$.
	We start by deriving the common d.f.~of $X_n$:
	\begin{eqnarray}\nonumber
	\begin{array}{rl}
	P(X_n\leq x)=&P(X_{n-1}\leq x , Z_n=0)+P(Y_{n}\leq x , Z_n=1)\\
	=&F(x)p_{n-1,n}(1,0)+F(x)p_{n}(1)\\
	=&F(x)p_n(0)+F(x)p_n(1)\\
	=&F(x).
	\end{array}
	\end{eqnarray}
	Suppose, without loss of generality, that $F(x)=\exp(-1/x)$, $x>0$. For $u\in(0,1]$ and $v=(-\log u)^{-1}$, we have
	\begin{eqnarray}\nonumber
	\begin{array}{rl}
	&P(F(X_i)\leq u,F(X_{i+1}\leq u)\\
	=&P(X_{i}\leq v, X_{i+1}\leq v , Z_i=1,Z_{i+1}=1)\\
	&+P(X_{i}\leq v, X_{i+1}\leq v , Z_i=1,Z_{i+1}=0)\\
	&+P(X_{i}\leq v, X_{i+1}\leq v , Z_i=0,Z_{i+1}=1)\\
	=&F^2(v)p_{i,i+1}(1,1)+F(v)p_{i,i+1}(1,0)+F^2(v)p_{i-1,i,i+1}(1,0,1)\\
	=&u^2(p_{i,i+1}(1,1)+p_{i,i+1}(0,1))+up_{i,i+1}(1,0).
	\end{array}
	\end{eqnarray}
	Therefore,
	\begin{eqnarray}\nonumber
	\begin{array}{rl}
	\lambda(i+1|i)=&2-\displaystyle\lim_{u\uparrow 1}\frac{1-P(F(X_i)\leq u,F(X_{i+1}\leq u)}{1-u}\\
	=& 2-\displaystyle\lim_{u\uparrow 1}\frac{1-u^2(p_{i,i+1}(1,1)+p_{i,i+1}(0,1))-up_{i,i+1}(1,0)}{1-u}\\
	=&2-2(p_{i,i+1}(1,1)+p_{i,i+1}(0,1))-p_{i,i+1}(1,0)\\
	=&2-2(1-p_{i,i+1}(1,0))-p_{i,i+1}(1,0)\\
	=&p_{i,i+1}(1,0),
	\end{array}
	\end{eqnarray}
	and we obtain the smoothness coefficient given by
	$$
	\begin{array}{rl}
	S_{n,m}=&\displaystyle\frac{1}{2(m-n+1)}\sum_{i=n}^{m}(p_{i-1,i}(1,0)+p_{i,i+1}(1,0))\\
	=&\displaystyle\frac{1}{2(m-n+1)}\sum_{i=n}^{m}(p_{i}(0)+p_{i+1}(0)).
	\end{array}
	$$
	We can see that $S_{n,m}$ increases with the tendency to stop in the initial sequence records, as expected. With some more time-consuming calculations, we can extend the result to models with longer lasting failures. We note that in this short-failures model, the estimation of $p_{i,i+1}(1,0)$ allows us to estimate $S_{n,m}$. The estimation of $p_{i,i+1}(1,0)$ can be done from the natural estimation of $P(X_i=X_{i+1})=E(\mathbf{1}_{\{X_i=X_{i+1}\}})$, since, in general, $\{Y_n\}_{n\geq 1}$ and $\{Z_n\}_{n\geq 1}$ are unobservable sequences.
\end{ex}	

\section{A new estimator for $\lambda$ under stationarity}\label{sinfer}

The usual linear Pearson's correlation coefficient does not give us enough insight about the amount of dependence in the tails (Embrechts \emph{et al.}~\cite{Embrechts2002} 2002). Extreme values theory is the natural framework to address this topic. The tail dependence coefficient $\lambda$ is perhaps the most common measure of extremal dependency. Many other coefficients have been presented in the literature, most of them related to $\lambda$ (see, e.g., Schmidt and Stadtm\"uller \cite{Schmidt2006} 2006, Li \cite{Li2009} 2009, Ferreira and Ferreira \cite{Ferreira2018b} 2018, and references therein). The smoothness coefficient introduced here is another measure of tail dependence and from Corollary \ref{corLambda} it coincides with $\lambda$ under stationarity. Inference based on the definition in (\ref{smoothCoef}) is quite straightforward by taking the respective empirical counterparts. Thus, we can state a new estimator for $\lambda$ based on $S_{n,m}$, which we denote $\hat{\lambda}^{FF}$. More precisely, considering a stationary sequence $\{X_n\}_{n\geq 1}$ with marginal d.f.~$F$ and $U(u)$ and $E(u)$, respectively the number of upcrossings and the number of exceedances of a high level $u$ of $\{F(X_n)\}_{n\geq 1}$, we have
\begin{eqnarray}\label{estimLambda}
\hat{\lambda}^{FF}:=\hat{S}_{n,m}=1-\frac{U(u)}{E(u)}\,.
\end{eqnarray}

In the following we address a simulation study in order to analyse the performance of $\hat{\lambda}^{FF}$ in (\ref{estimLambda}). We also consider two estimators of $\lambda$ well-known and commonly used in literature, motivated by the second equality in (\ref{ex1Lambda}):
\begin{eqnarray}\nonumber
\begin{array}{ccc}
\hat{\lambda}^{LOG}:=2-\frac{\log C_n(u,u)}{\log u} & \textrm{ and }&
\hat{\lambda}^{SEC}:=2-\frac{1-C_n(u,u)}{1-u},
\end{array}
\end{eqnarray}
where
$$
C_n(u,u):=\frac{1}{n-1}\sum_{i=1}^{n-1}\mathbf{1}_{\{\hat{F}(X_i)\leq u,\hat{F}(X_{i+1})\leq u\}},
$$
and $\hat{F}$ corresponds to the empirical d.f.~of $F$. See Frahm \emph{et al.}~(\cite{Frahm2005} 2005) and references therein.
  
The simulations correspond to $200$ replicas of samples with size $n=1000$ from the following models for $\{X_n\}_{n\geq 1}$:
\begin{itemize}
	\item First-order max-autoregressive (Davis and Resnick \cite{Davis1989} 1989) denoted MAR(1):\\
	 $X_n=\max\left(cX_{n-1}, (1-c)Z_n\right)$, $0<c<1$, with $\{Z_n\}_{n\geq 1}$ a sequence of i.i.d.~r.v.'s with unit Fr\'echet d.f., as well as $X_0$ and thus $X_n$, $n\geq 1$. We have $\lambda=c$ (see, e.g., Ferreira and Ferreira \cite{Ferreira2012} 2012);
	\item First order moving-maximum (Davis and Resnick \cite{Davis1989} 1989) denoted MMA(1):\\
	 $X_n=\max\left(cZ_{n}, (1-c)Z_{n-1}\right)$, $0<c<1$, with $\{Z_n\}_{n\geq 1}$ a sequence of i.i.d.~r.v.'s with unit Fr\'echet d.f., as well as $Z_0$ and thus $X_n$, $n\geq 1$. We have $\lambda=\max(c,1-c)$ (see, e.g., Heffernan \emph{et al.}~\cite{Heffernan2007} 2007);
	\item First order autoregressive Yeh-Arnold-Robertson Pareto(III)  (Arnold \cite{Arnold2001} 2001), denoted YARP(1):\\
	 $X_n=\min\left(p^{-1/\alpha}X_{n-1}, \frac{1}{1-U_n}\varepsilon_n\right)$, where $\{\varepsilon_n\}_{n\geq 1}$, is a sequence of i.i.d.~r.v.'s coming from a Pareto(III)(0,$\sigma$,$\alpha$), i.e., $1-{F}_X(x)=\left[1+\left(x/\sigma\right)^{\alpha}\right]^{-1}$, $\sigma,\alpha>0$ and sequence $\{U_n\}_{n\geq 1}$ of i.i.d.~r.v.'s coming from Bernoulli$(p)$, $0<p<1$, independent of $\varepsilon_n$, $n\geq 1$. We consider $1/0\equiv +\infty$, $X_0\frown$Pareto(III)(0,$\sigma$,$\alpha$) and thus $X_n$, $n\geq 1$. We have $\lambda=p$ (Ferreira \cite{Ferreira2012b} 2012).
\end{itemize}

The absolute bias (abias) and the root mean squared error (rmse) derived from simulations are in Table \ref{tab1}, where we considered the high level $u$ given by the 95\% sample quantile. Quantiles 90\% and 99\% were also used but do not improve the results and are not reported. The values in bold correspond to the least absolute bias and the least root mean squared error obtained in each model. We can see that the three estimators have very similar performances. The estimator $\hat{\lambda}^{FF}$ proposed here, being of very simple application, thus constitutes a possible alternative.

\begin{table}[!h]\label{tab1}
	\caption{Simulation results corresponding to the absolute bias (abias) and root mean squared error (rmse) obtained for estimators $\hat{\lambda}^{FF}$, $\hat{\lambda}^{LOG}$ and $\hat{\lambda}^{SEC}$, considering $u$ the 95\% sample quantile, within models MAR(1) with parameter values $c=0.25,\,0.5,\,0.75$,  MMA(1) with parameter values $c=0.25,\,0.5,\,0.75$ and YARP(1) parameter values $p=0.25,\,0.5,\,0.75$.}
\begin{center}
{\def\arraystretch{2}\tabcolsep=10pt
	\begin{tabular}{lll|cc|cc|cc|}
		\cline{4-9}
&	&		&	\multicolumn{2}{|c|}{$\hat{\lambda}^{FF}$}		&	\multicolumn{2}{c|}{$\hat{\lambda}^{LOG}$}			&	\multicolumn{2}{c|}{$\hat{\lambda}^{SEC}$}		\\
	\cline{4-9}
& &		&	abias	&	rmse	&	abias	&	rmse	&	abias	&	rmse	\\
\hline
\multicolumn{1}{|c}{} &\multirow{3}{*}{MAR(1)}	&	\multicolumn{1}{|c|}{c=0.25}	&	\textbf{0.0559}	&	\textbf{0.0723}	&	0.0579	&	0.0745	&	0.0566	&	0.0724	\\
\multicolumn{1}{|c}{} & &	\multicolumn{1}{|c|}{c=0.50}	&	\textbf{0.0556}	&	0.0695	&	0.0557	&	\textbf{0.0680}	&	0.0561	&	0.0700	\\
\multicolumn{1}{|c}{} & &	\multicolumn{1}{|c|}{c=0.75}	&	0.0457	&	\textbf{0.0550}	&	0.0489	&	0.0594	&	\textbf{0.0456}	&	0.0551	\\
\hline
\multicolumn{1}{|c}{} & \multirow{3}{*}{MMA(1)}	&	\multicolumn{1}{|c|}{c=0.25}	&	\textbf{0.0163}	&	\textbf{0.022}	&	0.0198	&	0.0257	&	0.0277	&	0.0354	\\
\multicolumn{1}{|c}{} & &	\multicolumn{1}{|c|}{c=0.50}	&	0.0453	&	0.0581	&	\textbf{0.0430}	&	\textbf{0.0533}	&	0.0461	&	0.0587	\\
\multicolumn{1}{|c}{} & &	\multicolumn{1}{|c|}{c=0.75}	&	0.0439	&	0.0523	&	\textbf{0.0348}	&	\textbf{0.044} &	0.0440	&	0.0527	\\
\hline
\multicolumn{1}{|c}{} & \multirow{3}{*}{YARP(1)}	&	\multicolumn{1}{|c|}{p=0.25}	&	\textbf{0.0520}	&	\textbf{0.0678}	&	0.0531	&	0.0695	&	0.0524	&	0.0678	\\
\multicolumn{1}{|c}{} & &	\multicolumn{1}{|c|}{p=0.50}	&	0.0576	&	0.0695	&	\textbf{0.0503}	&	\textbf{0.0623}	&	0.0577	&	0.0699	\\
\multicolumn{1}{|c}{} & &	\multicolumn{1}{|c|}{p=0.75}	&	\textbf{0.0469}	&	\textbf{0.0604}	&	0.0485	&	0.0633	&	0.0471	&	0.0604	\\
\hline

	\end{tabular}
}
\end{center}
\end{table}

\end{document}